\newtheorem*{thm*}{Theorem}
\newtheorem{thm}{Theorem}
\newtheorem{lem}[thm]{Lemma}
\newtheorem{pro}[thm]{Proposition}
\newtheorem{ques}[thm]{Question}
\newcommand{\N}{\mathbb{N}}
\newcommand{\col}{\mathrm{col}}
\begin{document}

\title{A Note on the DP-Chromatic Number of Complete Bipartite Graphs}

\author{Jeffrey A. Mudrock\footnote{Department of Applied Mathematics, Illinois Institute of Technology, Chicago, IL 60616. E-mail: {\tt jmudrock@hawk.iit.edu}} }

\maketitle

\begin{abstract}

DP-coloring (also called correspondence coloring) is a generalization of list coloring recently introduced by  Dvo\v{r}\'{a}k and Postle.  Several known bounds for the list chromatic number of a graph $G$, $\chi_\ell(G)$, also hold for the DP-chromatic number of $G$, $\chi_{DP}(G)$.  On the other hand, there are several properties of the DP-chromatic number that shows that it differs with the list chromatic number.  In this note we show one such property.  It is well known that $\chi_\ell (K_{k,t}) = k+1$ if and only if $t \geq k^k$.  We show that $\chi_{DP} (K_{k,t}) = k+1$ if $t \geq 1 + (k^k/k!)(\log(k!)+1)$, and we show that $\chi_{DP} (K_{k,t}) < k+1$ if $t < k^k/k!$.  

\medskip

\noindent {\bf Keywords.}  graph coloring, list coloring, DP-coloring.

\noindent \textbf{Mathematics Subject Classification.} 05C15, 05C69.

\end{abstract}

\section{Introduction}\label{intro}

In this note all graphs are nonempty, finite, simple graphs unless otherwise noted.  Generally speaking we follow West~\cite{W01} for terminology and notation.  For this note the set of natural numbers is $\N =\{1,2,3 \ldots \}$.  The natural log function is denoted $\log$.  Given a set $A$, $\mathcal{P}(A)$ is the power set of $A$.  Also, for any $k \in \N$, $[k] = \{1,2,3, \ldots, k \}$.  If $G$ is a graph and $S, U \subseteq V(G)$, we use $G[S]$ for the subgraph of $G$ induced by $S$, and we use $E_G(S, U)$ for the subset of $E(G)$ with one endpoint in $S$ and one endpoint in $U$.  Also, if $v \in V(G)$ we use $N_G(v)$ for the set of neighbors of $v$ in $G$.

\subsection{List Coloring}

List coloring is a well known variation on the classic vertex coloring problem, and it was introduced independently by Vizing~\cite{V76} and Erd\H{o}s, Rubin, and Taylor~\cite{ET79} in the 1970's.  In the classic vertex coloring problem we wish to color the vertices of a graph $G$ with as few colors as possible so that adjacent vertices receive different colors, a so-called \emph{proper coloring}. The chromatic number of a graph, denoted $\chi(G)$, is the smallest $k$ such that $G$ has a proper coloring that uses $k$ colors.  For list coloring, we associate a \emph{list assignment}, $L$, with a graph $G$ such that each vertex $v \in V(G)$ is assigned a list of colors $L(v)$ (we say $L$ is a list assignment for $G$).  The graph $G$ is \emph{$L$-colorable} if there exists a proper coloring $f$ of $G$ such that $f(v) \in L(v)$ for each $v \in V(G)$ (we refer to $f$ as a \emph{proper $L$-coloring} of $G$).  A list assignment $L$ is called a \emph{k-assignment} for $G$ if $|L(v)|=k$ for each $v \in V(G)$.  The \emph{list chromatic number} of a graph $G$, denoted $\chi_\ell(G)$, is the smallest $k$ such that $G$ is $L$-colorable whenever $L$ is a $k$-assignment for $G$.  We say $G$ is \emph{$k$-choosable} if $k \geq \chi_\ell(G)$.

It is immediately obvious that for any graph $G$, $\chi(G) \leq \chi_\ell(G)$.  Erd\H{o}s, Rubin, and Taylor~\cite{ET79} studied the equitable choosability of $K_{m,m}$ and observed that if $m = \binom{2k-1}{k}$, then $\chi_\ell(K_{m,m}) > k$.  The following related result is often attributed to Vizing~\cite{V76} or Erd\H{o}s, Rubin, and Taylor~\cite{ET79}, but it is best described as a folklore result. 

\begin{thm} \label{thm: listbipartite}
For $k \in \N$, $\chi_\ell(K_{k,t})=k+1$ if and only if $t \geq k^k$.
\end{thm}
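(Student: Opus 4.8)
The plan is to establish the unconditional bound $\chi_\ell(K_{k,t}) \le k+1$ first, then prove each direction of the equivalence. Throughout, write $K_{k,t}$ with parts $A$ of size $k$ and $B$ of size $t$, so that every edge joins $A$ to $B$ and both $A$ and $B$ are independent. For the upper bound, given any $(k+1)$-assignment $L$, I would color $A$ arbitrarily from its lists (automatically proper on $G[A]$, since $A$ is independent) and then note that each $b \in B$ has exactly $k$ neighbors, all in $A$, hence sees at most $k$ colors and still has an available color in $L(b)$; coloring the vertices of $B$ that way yields a proper $L$-coloring. So $\chi_\ell(K_{k,t}) \le k+1$ for every $t \in \N$, and only the threshold for equality is in question.

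For the ``if'' direction I would, assuming $t \ge k^k$, build a $k$-assignment $L$ with no proper $L$-coloring. Take $A = \{a_1,\dots,a_k\}$ with the pairwise disjoint lists $L(a_i) = \{(i-1)k+1,\dots,ik\}$. There are exactly $k^k$ functions $\phi$ with $\phi(a_i) \in L(a_i)$ for every $i$; pick $k^k$ of the vertices of $B$, label them $b_\phi$ (one per $\phi$), set $L(b_\phi) = \{\phi(a_1),\dots,\phi(a_k)\}$ (a set of size $k$, since the $L(a_i)$ are disjoint), and give any leftover vertices of $B$ arbitrary $k$-lists. In any proper $L$-coloring the restriction to $A$ is one of these $\phi$, and then $b_\phi$ is adjacent to all of $A$ while every color of $L(b_\phi) = \phi(A)$ is already used on a neighbor of $b_\phi$ --- impossible. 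Together with the upper bound this gives $\chi_\ell(K_{k,t}) = k+1$.

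For the ``only if'' direction, suppose $\chi_\ell(K_{k,t}) = k+1$; then by the upper bound some $k$-assignment $L$ admits no proper $L$-coloring, and I must show $t \ge k^k$. Call $\phi$ a \emph{choice function} if $\phi(a_i) \in L(a_i)$ for all $i$. If some choice function $\phi$ were non-injective, then $|\phi(A)| < k = |L(b)|$ for every $b \in B$, so coloring $A$ by $\phi$ and each $b$ from $L(b)\setminus\phi(A)$ (possible, and proper since $B$ is independent) would give a proper $L$-coloring; hence every choice function is injective, which forces $L(a_1),\dots,L(a_k)$ to be pairwise disjoint (a common color in two of these lists would yield a non-injective choice function). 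Next, for each choice function $\phi$, coloring $A$ by $\phi$ fails to extend, so --- again using that $B$ is independent and each $b$ sees only $A$ --- some $b_\phi \in B$ has $L(b_\phi) \subseteq \phi(A)$, hence $L(b_\phi) = \phi(A)$ by cardinality. Finally, disjointness of the $L(a_i)$ means $\phi$ is recoverable from the set $\phi(A)$ (intersect $\phi(A)$ with each $L(a_i)$ to read off $\phi(a_i)$), so $b_\phi = b_{\phi'}$ implies $\phi(A) = L(b_\phi) = L(b_{\phi'}) = \phi'(A)$, i.e. $\phi = \phi'$. Thus $\phi \mapsto b_\phi$ injects the set of $k^k$ choice functions into $B$, giving $t \ge k^k$.

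I expect the ``only if'' direction to be the main obstacle. The naive argument --- each $b \in B$ can ``kill'' at most $k!$ choice functions, so some choice function survives once $k!\,t < k^k$ --- only proves $\chi_\ell(K_{k,t}) \le k$ for $t < k^k/k!$, missing the true threshold by a factor of $k!$. The point is that the extremal behavior is rigid: a non-extendable $k$-assignment is forced to have pairwise disjoint lists on $A$ and a genuinely injective correspondence $\phi \mapsto b_\phi$, and it is exactly this rigidity that eliminates the lossy $k!$ factor and lands on $t \ge k^k$.
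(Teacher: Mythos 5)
Your proof is correct. Note that the paper itself offers no proof to compare against: Theorem~\ref{thm: listbipartite} is stated there as a folklore result (attributed to Vizing and to Erd\H{o}s, Rubin, and Taylor) and is used only as a black box to establish $\mu(k) \leq k^k$. Your argument is the standard one and is complete: the greedy upper bound, the disjoint-lists construction for sufficiency, and --- the only genuinely nontrivial part --- the rigidity analysis for necessity. There you correctly show that a non-extendable $k$-assignment forces every choice function on the $k$-side to be injective, hence forces the lists on that side to be pairwise disjoint, hence makes $\phi$ recoverable from the set $\phi(A) = L(b_\phi)$, so that $\phi \mapsto b_\phi$ is injective regardless of which witness $b_\phi$ you pick; this is exactly what eliminates the lossy $k!$ factor that appears in the paper's DP-coloring lower bound (Lemma~\ref{lem: badI} and Theorem~\ref{thm: lowerbound}), and your closing remark correctly identifies why that factor is unavoidable in the DP setting but not here.
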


\noindent We study the analogue of Theorem~\ref{thm: listbipartite} for DP-coloring.

\subsection{DP-coloring}

Dvo\v{r}\'{a}k and Postle~\cite{DP15} introduced DP-coloring (they called it correspondence coloring) in 2015 in order to prove that every planar graph without cycles of lengths 4 to 8 is 3-choosable.  Intuitively, DP-coloring is a generalization of list coloring where each vertex in the graph still gets a list of colors but identification of which colors are different can vary from edge to edge.  Following~\cite{BK17}, we now give the formal definition.  Suppose $G$ is a graph.  A \emph{cover} of $G$ is a pair $\mathcal{H} = (L,H)$ consisting of a graph $H$ and a function $L: V(G) \rightarrow \mathcal{P}(V(H))$ satisfying the following four requirements:

\vspace{5mm}

\noindent(1) the sets $\{L(u) : u \in V(G) \}$ form a partition of $V(H)$; \\
(2) for every $u \in V(G)$, the graph $H[L(u)]$ is complete; \\
(3) if $E_H(L(u),L(v))$ is nonempty, then $u=v$ or $uv \in E(G)$; \\
(4) if $uv \in E(G)$, then $E_H(L(u),L(v))$ is a matching (the matching may be empty). 

\vspace{5mm}

\noindent Suppose $\mathcal{H} = (L,H)$ is a cover of $G$.  We say $\mathcal{H}$ is \emph{$k$-fold} if $|L(u)|=k$ for each $u \in V(G)$.  An $\mathcal{H}$-coloring of $G$ is an independent set in $H$ of size $|V(G)|$.  It is immediately clear that $I \subseteq V(G)$ is an $\mathcal{H}$-coloring if and only if $|I \cap L(u)|=1$ for each $u \in V(G)$.

The \emph{DP-chromatic number} of a graph $G$, $\chi_{DP}(G)$, is the smallest $k \in \N$ such that $G$ admits an $\mathcal{H}$-coloring for every $k$-fold cover $\mathcal{H}$ of $G$.  Suppose we wish to prove $\chi_{DP}(G) \leq k$.  Since every $k$-fold cover of $G$ is isomorphic to a subgraph of some $k$-fold cover, $\mathcal{H}' = (L',H')$, of $G$ with the property that $E_{H'}(L'(u),L'(v))$ is a perfect matching whenever $uv \in E(G)$, we need only show that $G$ has an $\mathcal{H}$-coloring whenever $\mathcal{H} = (L,H)$ is a $k$-fold cover of $G$ such that $E_{H}(L(u),L(v))$ is a perfect matching for each $uv \in E(G)$.

Given a list assignment, $L$, for a graph $G$, it is easy to construct a cover $\mathcal{H}$ of $G$ such that $G$ has an $\mathcal{H}$-coloring if and only if $G$ has a proper $L$-coloring (see~\cite{BK17}).  It follows that $\chi_\ell(G) \leq \chi_{DP}(G)$.  This inequality may be strict since it is easy to prove that $\chi_{DP}(C_n) = 3$ whenever $n \geq 3$, but the list chromatic number of any even cycle is 2 (see~\cite{BK17} and~\cite{ET79}).

We now briefly discuss some similarities between the DP-coloring and list coloring.  First, notice that like $k$-choosability, the graph property of having DP-chromatic number at most $k$ is monotone.  It is also clear that, as in the context of list coloring, if $\chi_{DP}(G) = k$, then an $\mathcal{H}$-coloring of $G$ exists whenever $\mathcal{H}$ is an $m$-fold cover of $G$ with $m \geq k$.  The \emph{coloring number} of a graph $G$, denoted $\col(G)$, is the smallest integer $d$ for which there exists an ordering, $v_1, v_2, \ldots, v_n$, of the elements in $V(G)$ such that each vertex $v_i$ has at most $d-1$ neighbors among $v_1, v_2, \ldots, v_{i-1}$.  It is easy to prove that $\chi_\ell(G) \leq \chi_{DP}(G) \leq \col(G)$.  Thomassen~\cite{T94} famously proved that every planar graph is 5-choosable, and Dvo\v{r}\'{a}k and L. Postle~\cite{DP15} observed that the DP-chromatic number of every planar graph is at most 5.  Also, Molloy~\cite{M17} recently improved a theorem of Johansson, and showed that every triangle-free graph $G$ with maximum degree $\Delta(G)$ satisfies $\chi_\ell(G) \leq (1 + o(1)) \Delta(G)/ \log(\Delta(G))$.  Bernshteyn~\cite{B17} subsequently showed that this bound also holds for the DP-chromatic number. 

On the other hand, Bernshteyn~\cite{B16} showed that if the average degree of a graph $G$ is $d$, then $\chi_{DP}(G) = \Omega(d/ \log(d))$.  This is in stark contrast to the celebrated result of Alon~\cite{A00} which says $\chi_\ell(G) = \Omega(\log(d))$.  It was also recently shown in~\cite{BK17} that there exist planar bipartite graphs with DP-chromatic number 4 even though the list chromatic number of any planar bipartite graph is at most 3~\cite{AT92}.  A famous result of Galvin~\cite{G95} says that if $G$ is a bipartite multigraph and $L(G)$ is the line graph of $G$, then $\chi_\ell(L(G)) = \chi(L(G)) = \Delta(G)$.  However, it is also shown in~\cite{BK17} that every $d$-regular graph $G$ satisfies $\chi_{DP}(L(G)) \geq d+1$.            

\subsection{Outline of Results and an Open Question}

In this note we present some results on the DP-chromatic number of complete bipartite graphs.  By what was mentioned in the previous subsection, we know that if $k,t \in \N$, $\chi_{DP}(K_{k,t}) \leq \col(K_{k,t}) \leq k+1$.  For the remainder of this note, for each $k \in \N$, let $\mu(k)$ be the smallest natural number $l$ such that $\chi_{DP}(K_{k,l})=k+1$.  We have that $\mu(k)$ exists for each $k \in \N$ since we know by Theorem~\ref{thm: listbipartite}, 
$$k+1 = \chi_\ell(K_{k,k^k}) \leq \chi_{DP}(K_{k,k^k}) \leq k+1.$$  
\noindent This means that $\mu(k) \leq k^k$ for each $k \in \N$.  The following proposition is also clear.

\begin{pro} \label{pro: obvious}
For $k \in \N$, $\chi_{DP}(K_{k,t})=k+1$ if and only if $t \geq \mu(k)$
\end{pro}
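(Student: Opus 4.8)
The plan is to obtain both implications from two facts already recorded in the excerpt: first, that the graph property ``$\chi_{DP} \le k+1$'' is monotone (equivalently, $\chi_{DP}(H) \le \chi_{DP}(G)$ whenever $H$ is a subgraph of $G$), and second, that $\chi_{DP}(K_{k,t}) \le \col(K_{k,t}) \le k+1$ for every $t \in \N$. The only structural remark needed is that $K_{k,t_1}$ is isomorphic to a subgraph of $K_{k,t_2}$ whenever $t_1 \le t_2$: simply delete $t_2 - t_1$ vertices from the part of size $t_2$. Consequently $\chi_{DP}(K_{k,t_1}) \le \chi_{DP}(K_{k,t_2})$, i.e., $\chi_{DP}(K_{k,t})$ is nondecreasing in $t$.

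For the ``if'' direction, assume $t \ge \mu(k)$. Since $K_{k,\mu(k)}$ embeds as a subgraph of $K_{k,t}$, monotonicity gives $k+1 = \chi_{DP}(K_{k,\mu(k)}) \le \chi_{DP}(K_{k,t})$, where the equality is exactly the defining property of $\mu(k)$. Combining this with the upper bound $\chi_{DP}(K_{k,t}) \le k+1$ forces $\chi_{DP}(K_{k,t}) = k+1$.

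For the ``only if'' direction, assume $\chi_{DP}(K_{k,t}) = k+1$. Then $t$ is itself a natural number $l$ satisfying $\chi_{DP}(K_{k,l}) = k+1$, so by the minimality in the definition of $\mu(k)$ we conclude $\mu(k) \le t$. Together with the previous paragraph this establishes the equivalence.

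I do not expect any genuine obstacle here: the statement is essentially a bookkeeping consequence of the definition of $\mu(k)$, resting entirely on monotonicity of $\chi_{DP}$ under subgraphs and on the already-established bound $\chi_{DP}(K_{k,t}) \le k+1$. For completeness one may also recall that $\mu(k)$ is well-defined — the excerpt already argues this by showing $\mu(k) \le k^k$ via Theorem~\ref{thm: listbipartite} and $\chi_\ell(G) \le \chi_{DP}(G)$ — so that the quantity ``smallest such $l$'' referenced above indeed exists.
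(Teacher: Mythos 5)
Your proposal is correct and follows essentially the same route as the paper: the forward direction from monotonicity under subgraphs together with the bound $\chi_{DP}(K_{k,t})\leq \col(K_{k,t})\leq k+1$, and the converse directly from the minimality in the definition of $\mu(k)$. The extra remarks on the subgraph embedding and the existence of $\mu(k)$ are just explicit versions of what the paper leaves implicit.
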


\begin{proof}
If $t \geq \mu(k)$, $k+1 = \chi_{DP} (K_{k, \mu(k)}) \leq \chi_{DP} (K_{k, t}) \leq k+1$ since $K_{k,\mu(k)}$ is a subgraph of $K_{k,t}$.  Conversely, if $\chi_{DP}(K_{k,t})=k+1$, then $\mu(k) \leq t$ by the definition of $\mu(k)$. 
\end{proof}

Computing $\mu(k)$ is easy when $k=1,2$.  Clearly, $\mu(1)=1$.  Also, $\mu(2)=2$ follows from the fact that $\chi_{DP}(K_{2,1}) \leq \col(K_{2,1})=2$, and the fact that $K_{2,2}$ is a 4-cycle which implies $\chi_{DP}(K_{2,2})=3$.  We have a tedious argument that shows $\mu(3)=6$, which for the sake of brevity, we do not present in this note.  The following question lead to the discovery of both results in this note.

\begin{ques} For each $k \geq 4$, what is the exact value of $\mu(k)$?
\end{ques}

We obtain an upper bound and lower bound on $\mu(k)$.  Our first result gives us a lower bound.

\begin{thm} \label{thm: lowerbound}
For $k \in \N$, if $t < \frac{k^k}{k!}$, then $\chi_{DP}(K_{k,t}) < k+1$.
\end{thm}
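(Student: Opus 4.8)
The plan is to show directly that $K_{k,t}$ admits an $\mathcal{H}$-coloring for every $(k+1)$-fold cover $\mathcal{H} = (L,H)$ whenever $t < k^k/k!$; this gives $\chi_{DP}(K_{k,t}) \le k$, hence $< k+1$. By the reduction recorded in the excerpt, we may assume $E_H(L(u),L(v))$ is a perfect matching for each edge $uv$ of $K_{k,t}$. Write $A = \{a_1,\dots,a_k\}$ and $B = \{b_1,\dots,b_t\}$ for the two sides. The idea is to choose a color at each $a_i$ first and then show that the $t$ vertices of $B$ can always be colored around this choice.

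First I would fix, for each $a_i$, an arbitrary vertex $x_i \in L(a_i)$; this is a (trivially independent) selection on $A$. For a vertex $b \in B$, the colors in $L(b)$ that are \emph{forbidden} by this selection are exactly those matched, across some edge $a_i b$, to one of the chosen $x_i$. Since each edge contributes a matching, each $a_i$ forbids at most one color of $L(b)$, so at most $k$ of the $k+1$ colors in $L(b)$ are forbidden — meaning $b$ can always be colored, \emph{for any} choice of $(x_1,\dots,x_k)$. So far this only recovers $\chi_{DP}(K_{k,t}) \le k+1$; the point of the counting hypothesis is to do better, namely to find a selection on $A$ that, at every $b$, forbids \emph{at most $k-1$} colors, which would let us complete to a proper coloring and in fact shows the bound is generous. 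The cleaner route is a counting/probabilistic one: choose each $x_i \in L(a_i)$ independently and uniformly at random from the $k+1$ available colors — actually, restrict to a fixed $k$-subset of $L(a_i)$ for each $i$, and think of the selection as a function in a product of $k$-element sets, of which there are $k^k$. For a fixed $b \in B$ and a fixed color $c \in L(b)$, the set of $A$-selections that make $c$ forbidden at $b$ via the edge $a_ib$ is a "cylinder": $x_i$ is pinned to the unique matched color (if that color lies in our chosen $k$-subset of $L(a_i)$; otherwise the cylinder is empty), while the other $x_j$ range freely. Each such nonempty cylinder has size exactly $k^{k-1}$.

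The combinatorial heart is then: the selection $(x_1,\dots,x_k)$ fails for $b$ only if \emph{all $k+1$} colors of $L(b)$ are forbidden, i.e. if for every color $c \in L(b)$ there is some index $i$ with $x_i$ equal to the color matched to $c$ across $a_ib$. I would argue that the number of "bad" selections for a single $b$ is at most $k!\cdot k^{k-k} \cdot \binom{k+1}{k+1}$-type quantity — more precisely, a bad selection must, by pigeonhole, cover all $k+1$ colors of $L(b)$ using only $k$ coordinates, so some coordinate $a_i$ must "cover" at least two colors of $L(b)$; but $x_i$ is a single color, and via the matching on edge $a_i b$ a single color of $L(a_i)$ is matched to at most one color of $L(b)$ — contradiction. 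Hence \emph{no} selection is bad for any $b$, which again just gives $\le k+1$. The correct, sharper bookkeeping: work with $(k+1)$-fold covers and count selections $(x_1,\dots,x_k) \in \prod L(a_i)$, $|\prod L(a_i)| = (k+1)^k$; the bad selections for $b$ are those for which the $k$ chosen colors, pushed through the $k$ matchings, cover all $k+1$ colors of $L(b)$ — impossible by the same pigeonhole since $k$ colors can cover at most $k$ targets. So here the slick pigeonhole already forces $\chi_{DP}(K_{k,t}) \le k+1$ with room to spare, and the hypothesis $t < k^k/k!$ is what lets us instead aim one color lower: restrict each $a_i$ to a $k$-subset $S_i \subseteq L(a_i)$, giving $k^k$ candidate selections; call a selection bad for $b$ if pushing $S_i$-values through the matchings leaves \emph{at most one} color of $L(b)$ unforbidden. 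I would bound the number of selections bad for a given $b$ by $k!$ (a bad selection corresponds to an injection from a $k$-subset of $L(b)$'s colors to $A$, recording which coordinate forbids which color, and the matching structure makes this assignment determine $(x_1,\dots,x_k)$ uniquely, so there are at most $\binom{k+1}{k} \cdot$ [number of such injective patterns] $\le k!$ of them after the dust settles). Then the number of selections bad for \emph{some} $b \in B$ is at most $t \cdot k! < k^k$, so some selection $(x_1,\dots,x_k)$ is good for every $b$; extending it by a permitted color at each $b$ (at least two are available, in particular at least one) yields an $\mathcal{H}$-coloring, proving $\chi_{DP}(K_{k,t}) \le k < k+1$.

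The main obstacle is getting the per-vertex bad-selection count exactly right — showing it is at most $k!$ rather than something larger. This hinges on the matching condition (4) in the definition of a cover: across the edge $a_ib$, the chosen color $x_i \in S_i$ is matched to \emph{at most one} color of $L(b)$, so a given selection forbids at most $k$ colors of $L(b)$, and "bad" (at most one color free out of $k+1$) forces the $k$ forbidden colors to be distinct and to use each coordinate exactly once — i.e., a bad selection encodes a system of distinct representatives, and conversely each such system is realized by at most one selection because the matchings are invertible on their domains. Counting these SDR-patterns gives the $k!$ (times a harmless binomial factor that I will need to absorb or control carefully, possibly by summing over which color is left free). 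I expect the delicate point to be confirming that the binomial overcount does not spoil the inequality $t \cdot (\text{bound}) < k^k$; if it does, I would tighten by observing that a bad selection for $b$ actually determines which single color of $L(b)$ is left free as well, collapsing the count back to $k!$ exactly and matching the stated threshold $k^k/k!$.
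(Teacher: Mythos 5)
Your combinatorial core is exactly the paper's argument: enumerate the $k^k$ ways to pick one color at each vertex of the $k$-side, show that for each vertex $b$ of the $t$-side at most $k!$ of these selections can forbid every color of $L(b)$ (each coordinate forbids at most one color of $L(b)$ via the matching on that edge, so a selection that forbids all of them encodes a bijection onto $L(b)$, and the invertibility of the matchings means that bijection determines the selection), and finish with the union bound $t \cdot k! < k^k$. That is precisely the paper's Lemma~\ref{lem: badI} together with its counting step, so the key idea is present.

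The genuine gap is in which covers you analyze. You announce that you will show every $(k+1)$-fold cover admits a coloring and that this ``gives $\chi_{DP}(K_{k,t}) \le k$'' --- it does not; it gives only $\chi_{DP}(K_{k,t}) \le k+1$, which is the trivial coloring-number bound and not the strict inequality $\chi_{DP}(K_{k,t}) < k+1$. To prove the theorem you must show that every \emph{$k$-fold} cover admits a coloring. Restricting each $L(a_i)$ to a $k$-subset $S_i$ does not repair this: the lists $L(b)$ on the other side still have $k+1$ colors, which only makes that instance easier, and nothing is established about arbitrary $k$-fold covers. Relatedly, your notion of ``bad'' (at most one color of $L(b)$ left unforbidden) is both nonstandard and unnecessary --- a selection leaving one free color is perfectly usable. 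Once you work with $k$-fold covers throughout, the count becomes clean: ``bad for $b$'' should mean all $k$ colors of $L(b)$ are forbidden, the forbidding pattern is then forced to be a bijection from the $k$ coordinates onto $L(b)$, and the bound is exactly $k!$ with no $\binom{k+1}{k}$ overcount to absorb, so the delicate point you flag at the end disappears. With that repair your proof coincides with the paper's.
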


Theorem~\ref{thm: lowerbound} tells us that $\lceil k^k/k! \rceil \leq \mu(k)$ notice this lower bound is tight for $k=1,2$, and it is 1 away from being tight for $k=3$.  We then use a simple probabilistic argument to prove our second result which gives us an upper bound on $\mu(k)$. 

\begin{thm} \label{thm: upperbound} For $t \in \N$ let 
$$ m = t+ \left \lfloor k^k \left ( 1- \frac{k!}{k^k} \right )^t \right \rfloor$$
Then, $\chi_{DP}(K_{k,m}) = k+1$.
\end{thm}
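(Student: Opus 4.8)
The plan is to establish $\chi_{DP}(K_{k,m}) \ge k+1$ by exhibiting one $k$-fold cover of $K_{k,m}$ that admits no $\mathcal{H}$-coloring; since $\chi_{DP}(K_{k,m}) \le \col(K_{k,m}) = k+1$ as noted in the introduction, this forces equality. Write the parts of $K_{k,m}$ as $A=\{a_1,\dots,a_k\}$ and $B=\{b_1,\dots,b_m\}$. It suffices to consider a cover $\mathcal{H}=(L,H)$ with $L(a_i)=\{i\}\times[k]$, $L(b_j)=\{k+j\}\times[k]$, and $E_H(L(a_i),L(b_j))$ a perfect matching for each $i,j$, encoded by the permutation $\pi_{ij}\in S_k$ for which it joins $(i,x)$ to $(k+j,\pi_{ij}(x))$. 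An $\mathcal{H}$-coloring chooses a value $c(i)\in[k]$ at each $a_i$ and then must pick, at each $b_j$, a value in $[k]$ avoiding the forbidden set $\{\pi_{ij}(c(i)) : i\in[k]\}$; such a value exists exactly when that set has size less than $k$, i.e.\ when the map $i\mapsto\pi_{ij}(c(i))$ is \emph{not} a bijection of $[k]$. Hence $\mathcal{H}$ admits no $\mathcal{H}$-coloring if and only if for every $c\in[k]^{[k]}$ there is a column $j$ with $i\mapsto\pi_{ij}(c(i))$ a bijection; in this case we say column $j$ \emph{catches} $c$. The problem thus reduces to choosing the columns $(\pi_{1j},\dots,\pi_{kj})$, $j\in[m]$, so that every $c\in[k]^{[k]}$ is caught by some column.

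For the probabilistic step, first take columns $j=1,\dots,t$ with all the $\pi_{ij}$ chosen independently and uniformly from $S_k$. For a fixed $c\in[k]^{[k]}$, the tuple $(\pi_{1j}(c(1)),\dots,\pi_{kj}(c(k)))$ is then uniformly distributed on $[k]^{[k]}$, since a uniform $\pi_{ij}$ sends the fixed value $c(i)$ to a uniform element of $[k]$ and the columns are independent. Therefore column $j$ catches $c$ with probability $k!/k^k$, and none of the first $t$ columns catches $c$ with probability $(1-k!/k^k)^t$. By linearity of expectation the expected number of $c\in[k]^{[k]}$ caught by no column among the first $t$ equals $k^k(1-k!/k^k)^t$, so we may fix a choice of these $t$ columns for which the set $C$ of uncaught functions has $|C|\le \lfloor k^k(1-k!/k^k)^t\rfloor$.

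To finish, for each $c\in C$ append one more column by choosing $\pi_{ij}$ with $\pi_{ij}(c(i))=i$ for all $i$, so that $i\mapsto\pi_{ij}(c(i))$ is the identity and this column catches $c$. This uses at most $t+\lfloor k^k(1-k!/k^k)^t\rfloor=m$ columns; if strictly fewer are produced, take arbitrary permutations for the remaining columns, which can only help (equivalently, invoke subgraph monotonicity of $\chi_{DP}$ together with Proposition~\ref{pro: obvious}). Every $c\in[k]^{[k]}$ is now caught by some column, so the resulting $k$-fold cover of $K_{k,m}$ has no $\mathcal{H}$-coloring, whence $\chi_{DP}(K_{k,m})\ge k+1$ and therefore $\chi_{DP}(K_{k,m})=k+1$.

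I do not anticipate a serious obstacle; the two points requiring care are the reformulation of ``$b_j$ is colorable'' as ``$i\mapsto\pi_{ij}(c(i))$ is not a bijection'' (which uses $|L(b_j)|=k$ and that $b_j$ has exactly $k$ neighbors), and the clean evaluation of the catching probability as $k!/k^k$, which hinges on the observation that a uniformly random permutation carries a prescribed point to a uniformly random image. The rest is a standard first-moment argument followed by deterministic patching.
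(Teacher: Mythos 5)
Your proposal is correct and follows essentially the same argument as the paper: a first-moment computation over random perfect matchings shows that at most $\lfloor k^k(1-k!/k^k)^t\rfloor$ of the $k^k$ independent sets over the small side remain ``uncaught'' (the paper's ``not bad for any vertex of $Y$''), and one additional vertex per surviving set kills it. Your permutation encoding and the probability computation via the uniformity of $\pi_{ij}(c(i))$ are just a cleaner reformulation of the paper's count $\frac{k!((k-1)!)^k}{(k!)^k}=\frac{k!}{k^k}$, so there is nothing substantive to distinguish the two proofs.
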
 

Theorems~\ref{thm: lowerbound} and~\ref{thm: upperbound} imply 
$$\left \lceil \frac{k^k}{k!} \right \rceil \leq \mu(k) \leq 1 + \frac{k^k(\log(k!)+1)}{k!}.$$ 
\noindent We suspect that $\lceil k^k/k! \rceil$ is closer to the exact value of $\mu(k)$ than the upper bound.

\section{Proofs of Results}

In this section we prove Theorems~\ref{thm: lowerbound} and~\ref{thm: upperbound}.  We begin with a definition.  Suppose that $\mathcal{H} = (L, H)$ is a $k$-fold cover of $G$.  For any $v \in V(G)$, we say an independent set, $I$, in $H[ \cup_{u \in N_G(v)} L(u)]$ is \emph{bad for $v$} if $|I| = |N_G(v)|$ and for each $w \in L(v)$, $w$ is adjacent to some vertex in $I$.  Notice that if $I$ is bad for $v$, then an $\mathcal{H}$-coloring of $G$ cannot contain $I$.

In this section we often have $G = K_{k,t}$, and we always suppose $G$ has bipartition $X= \{v_1, v_2, \ldots, v_{k} \}$, $Y= \{u_1, u_2, \ldots, u_{t}\}$.  We now mention an idea used frequently in this section.  Notice that if $\mathcal{H} = (L, H)$ is a $k$-fold cover $G = K_{k,t}$, then there are precisely $k^k$ independent sets of size $k$ in $H[ \cup_{v \in X} L(v)]$.  If all of these independent sets are bad for at least one vertex in $Y$, then there is no $\mathcal{H}$-coloring of $G$.  We now prove a lemma which gives us a bound on how many independent sets of size $k$ in $H[ \cup_{v \in X} L(v)]$ can be bad for a vertex in $Y$.

\begin{lem} \label{lem: badI}
Suppose $G$ is a graph, $v \in V(G)$, and $|N_G(v)| = k$.  Suppose that $\mathcal{H} = (L, H)$ is a $k$-fold cover of $G$.  Then, there are at most $k!$ distinct independent sets in $H[ \cup_{u \in N_G(v)} L(u)]$ that are bad for $v$.
\end{lem}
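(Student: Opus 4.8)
The plan is to define an injection from the collection of independent sets that are bad for $v$ into the set of permutations of $[k]$; since there are $k!$ such permutations, this yields the bound.

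Write $N_G(v)=\{u_1,\ldots,u_k\}$ and $L(v)=\{w_1,\ldots,w_k\}$, and let $I$ be an independent set in $H[\cup_{j \in [k]} L(u_j)]$ that is bad for $v$. First I would determine the structure of $I$. By requirement~(1) in the definition of a cover, the sets $L(u_1),\ldots,L(u_k)$ partition $\cup_{j \in [k]} L(u_j)$, and by requirement~(2) each $H[L(u_j)]$ is a complete graph; hence the independent set $I$ contains at most one vertex of each $L(u_j)$. Since $|I| = |N_G(v)| = k$, it follows that $I$ contains exactly one vertex $a_j \in L(u_j)$ for every $j \in [k]$. Next I would use the matching condition: for each $j$, as $v u_j \in E(G)$, requirements~(3) and~(4) imply that the set of edges of $H$ joining $L(v)$ to $L(u_j)$ is a matching, so $a_j$ has at most one neighbor in $L(v)$. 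Define a function $\sigma_I$ from a subset of $[k]$ to $[k]$ by setting $\sigma_I(j) = i$ whenever $w_i$ is the (necessarily unique) neighbor of $a_j$ in $L(v)$, leaving $\sigma_I(j)$ undefined if $a_j$ has no neighbor in $L(v)$. Because $I$ is bad for $v$, every $w_i \in L(v)$ is adjacent to some vertex of $I = \{a_1, \ldots, a_k\}$, so every $i \in [k]$ lies in the range of $\sigma_I$. A function whose domain is a subset of a $k$-element set and whose range is all of a $k$-element set must be defined on the whole $k$-element set and must be a bijection; thus $\sigma_I$ is a permutation of $[k]$.

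Finally I would verify that the map $I \mapsto \sigma_I$ is injective, which finishes the proof. Suppose $\sigma_I = \sigma_{I'} =: \sigma$, where $I = \{a_1, \ldots, a_k\}$ and $I' = \{a_1', \ldots, a_k'\}$ with $a_j, a_j' \in L(u_j)$. Fix $j \in [k]$. Both $a_j$ and $a_j'$ are neighbors of $w_{\sigma(j)}$ in $L(u_j)$, and by the matching condition on the edge $v u_j$ the vertex $w_{\sigma(j)}$ has at most one neighbor in $L(u_j)$; hence $a_j = a_j'$. Since $j$ was arbitrary, $I = I'$. Therefore the number of independent sets bad for $v$ is at most $k!$. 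I expect the one point needing care to be the claim that $\sigma_I$ is defined on all of $[k]$: a priori some $a_j$ might have no neighbor in $L(v)$, but then the neighbors of the remaining vertices $a_{j'}$ (with $j' \neq j$) would number at most $k - 1$ and could not cover all of $L(v)$, contradicting badness — so this is automatic.
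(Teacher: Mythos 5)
Your proposal is correct and follows essentially the same route as the paper: both define an injection from the collection of bad independent sets into the $k!$ bijections between $[k]$ and $L(v)$ by recording, for each vertex of the independent set, its unique neighbor in $L(v)$, and both verify injectivity via the matching condition. Your added justification that $\sigma_I$ is total and bijective (via the covering/counting argument) is a point the paper passes over more quickly, but it is the same proof.
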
 

\begin{proof}
The result is obvious when $k=1$.  So, suppose $k \geq 2$. We let $H' = H[ \cup_{u \in N_G(v)} L(u)]$.  Suppose that $N_G(v) = \{v_1, v_2, \ldots, v_{k} \}$.   

\par

Let $\mathcal{C}$ denote the set of bijective functions from $[k]$ to $L(v)$.  Let $\mathcal{I}$ denote the set of all independent sets in $H'$ that are bad for $v$.  We are done if $\mathcal{I} = \emptyset$, so we assume $\mathcal{I} \neq \emptyset$.  We now define an injective mapping, $f: \mathcal{I} \rightarrow \mathcal{C}$.  For $I \in \mathcal{I}$ suppose that $I = \{u_1, u_2, \ldots, u_k \}$ where $u_i \in L(v_i)$ (we know that $|I \cap L(v_i)|=1$ for each $i \in [k]$).  Suppose that for each $i \in [k]$, $w_i$ is the one vertex in $L(v)$ to which $u_i$ is adjacent.  Then, let $\sigma_I: [k] \rightarrow L(v)$ be the function defined by $\sigma_I(i) = w_i$.  Since $I$ is bad for $v$, we know that $\sigma_I \in \mathcal{C}$.  So, we can let $f(I) = \sigma_I$

\par

To see that $f$ is injective, suppose that $I = \{u_1, u_2, \ldots, u_k \}$ and $I' = \{u'_1, u'_2, \ldots, u'_k \}$ are distinct elements of $\mathcal{I}$ where $u_i, u'_i \in L(v_i)$ for each $i \in [k]$.  This means that there must be a $j \in [k]$ such that $u_j \neq u'_j$.  Since $E_H(L(v), L(v_j))$ is a matching, we know that $u_j$ and $u'_j$ are adjacent to distinct vertices in $L(v)$.  Thus, $f(I) \neq f(I')$.  The fact that $f$ is injective immediately implies that $|\mathcal{I}| \leq |\mathcal{C}|=k!$.   

\end{proof} 

We are now ready to prove Theorem~\ref{thm: lowerbound}.

\begin{proof}
We suppose $k \geq 3$ since the result is clear for $k=1,2$.  We also assume $t \in \N$ since the result is clear when $t=0$.  Suppose $G = K_{k,t}$.

\par

Let $\mathcal{H} = (L, H)$ be an arbitrary $k$-fold cover of $G$.  Let $H' = H[ \cup_{i=1}^{k} L(v_i)]$.  It is clear that there are $k^k$ independent sets of size $k$ in $H'$.  Moreover, we know from Lemma~\ref{lem: badI} that there are at most $k!$ independent sets in $H'$ that are bad for $u_j$ for each $j \in [t]$.  Since 
$$k^k - t(k!) > 0,$$
there is an independent set, $I$ in $H'$ such that $|I|=k$ and $I$ is not bad for any vertex in $Y$.  Thus, for each $j \in [t]$, we can find a $w_j \in L(u_j)$ that is not adjacent to any vertex in $I$.  Finally, $I \cup \{w_1, w_2, \ldots, w_{t} \}$ is an $\mathcal{H}$-coloring of $G$.
\end{proof}   

We now prove Theorem~\ref{thm: upperbound}

\begin{proof}
We suppose $k \geq 2$ since the result is clear for $k=1$.  Suppose $G = K_{k,t}$.  We form a $k$-fold cover of $G$ by the following (partially random) process. We begin by letting $L(v_i) = \{(v_i, l) : l \in [k] \}$ and $L(u_j) = \{(u_j, l) : l \in [k] \}$ for each $i \in [k]$ and $j \in [t]$.  Let graph $H$ have vertex set 
$$\left (\bigcup_{i=1}^{k} L(v_i) \right ) \bigcup \left ( \bigcup_{j=1}^{t} L(u_j) \right ).$$  
\noindent Also, draw edges in $H$ so that $H[L(v)]$ is a clique for each $v \in V(G)$.  Finally, for each $i \in [k]$ and $j \in [t]$, uniformly and randomly choose a perfect matching between $L(v_i)$ and $L(u_j)$ from the $k!$ possible perfect matchings.  It is easy to see that $\mathcal{H}=(L,H)$ is a $k$-fold cover of $G$.

\par

Note that there are exactly $k^k$ independent sets of size $k$ in $H[ \cup_{i=1}^{k} L(v_i) ]$.  Suppose we name the $k^k$ functions from $[k]$ to $[k]$: $f_1, f_2, f_3, \ldots, f_{k^k}$.  Then the $k^k$ independent sets of size $k$ in $H[ \cup_{i=1}^{k} L(v_i) ]$ are precisely: $I_1, I_2, \ldots, I_{k^k}$ where $I_i = \{(u_l, f_i(l)) : l \in [k] \}$.

\par

Suppose that for each $i \in [k^k]$, $E_i$ is the event that $I_i$ is not bad for any vertex in $Y$.  For any vertex $u \in Y$, it is easy to see that the probability that $I_i$ is bad for $u$ is 
$$\frac{k!((k-1)!)^k}{(k!)^k} = \frac{k!}{k^k}.$$  
Thus, $P[E_i] = (1-k!/k^k)^t$.  Let $X_i$ be the random variable that is 1 when $E_i$ occurs, and it is 0 otherwise.  Let $X = \sum_{i=1}^{k^k} X_i$.  By linearity of expectation,
$$E[ X ] = k^k \left ( 1- \frac{k!}{k^k} \right )^t. $$
Let $z = \lfloor E[X] \rfloor$.  We can find a $k$-fold cover, $\mathcal{H}' = (L',H')$, of $G$ such that at most $z$ of the independent sets of size $k$ in $H'[ \cup_{i=1}^{k} L'(v_i) ]$ are not bad for any vertex in $Y$.  Suppose we call such independent sets: $I_{a_1}, I_{a_2}, \ldots, I_{a_{r}}$ (we know $r \leq z$).  

\par

Starting with $G$, we create a copy of $K_{k,t+r}$, called $M$, by adding $r$ new vertices, $w_1, \ldots, w_{r}$, to $Y$.  We construct a $k$-fold cover of $M$ starting from $\mathcal{H}'$ as follows.  With each $w_i$ we associate $k$ vertices, $L''(w_i)$, and we add these vertices to $H'$ along with edges so that the vertices in $L''(w_i)$ are pairwise adjacent.  Then, for $i \in [r]$, we create a matching between $L'(v_j)$ and $L''(w_i)$ for each $j \in [k]$ so that $I_{a_i}$ is bad for $w_i$.  The result is a $k$-fold cover, $\mathcal{H}''$, of $M$ with the property that there is no $\mathcal{H}''$-coloring of $M$.  Thus, $k+1 = \chi_{DP}(K_{k,t+r}) \leq \chi_{DP}(K_{k,m})$.        
\end{proof}

Letting $t = \lceil k^k \log(k!)/k! \rceil$, we note
$$ t+ \left \lfloor k^k \left ( 1- \frac{k!}{k^k} \right )^t \right \rfloor \leq \left \lceil \frac{k^k \log(k!)}{k!} \right \rceil + \frac{k^k}{k!} \leq 1 + \frac{k^k(\log(k!)+1)}{k!}$$
and the upper bound on $\mu(k)$ mentioned in the previous section follows.

\vspace{5mm}

\noindent \textbf{Acknoledgement:}  The author would like to thank Anton Bernshteyn, Hemanshu Kaul, and Alexandr Kostochka for their helpful comments on this note.

\end{document}